\newtheorem{theorem}{Theorem}[section]
\theoremstyle{definition}
\newtheorem{definition}{Definition}[section]
\title{Subclass of bi-univalent function associated to Chebyshev polynomial}
\date{}
\begin{document}
\numberwithin{equation}{section}
\maketitle
\date{}
\begin{center}
\author{{\bf{G. M. Birajdar}}\vspace{.31cm}\\
	School of Mathematics \& Statistics,\\
	Dr. Vishwanath Karad MIT World Peace University,\\
	Pune (M.S) India 411038\\
	Email: gmbirajdar28@gmail.com}\vspace{0.31cm}\\
\author{{\bf{N. D. Sangle}}\vspace{0.31cm}\\
	Department of Mathematics,\\
	D. Y. Patil College of Engineering \& Technology,\\
	Kasaba Bawda, Kolhapur\\
	(M.S.) India 416006\\
	Email: navneet\_sangle@rediffmail.com}\vspace{.5cm}\\
\end{center}
\vspace{1cm}
\abstract{}
The Chebyshev polynomials are utilized in this study to define the subclass of the bi-univalent function. Also, Chebyshev polynomial bounds and Fekete-Szego inequalities for functions defined in the classes are established. \\

{\bf{2000 Mathematics Subject Classification:}} 30C45, 30C50.\\

{\bf{Keywords:}} Analytic, bi-univalent, Chebyshev polynomial, Salagean, subordination.\\
\section{Introduction} 
Let $A$ be the class of analytic functions $f(z)$ of the form
\begin{equation}
f(z)=z+\sum_{m=2}^{\infty}{a_{m}}{z^{m}}, \ \ a_m\ge0  \label{f}
\end{equation}
are univalent in the unit disc $E=\{z\in C:\left|z\right|<1\}$ and satisfy the conditions $f(0)=0$, $f'(0)=1$ which are normalization conditions.\\
Let $S$ denote the subclass of $A$ consisting functions of the form (1.1).\\
In \cite{B1}, Salagean defined the differential operator:\\
\begin{align*}
{D^0}f\left( z \right) &= f\left( z \right)\\
{D^1}f\left( z \right) &= z\,{f^{'}}\left( z \right)
\end{align*}
and
\begin{align*}
{D^m}f\left(z \right) = D\left( {{D^{m - 1}}f\left( z \right)} \right) \quad \left({m \in {\mathbb{N}_0} = \mathbb{N}\cup \left\{ 0 \right\}} \right).
\end{align*}
For the functions of the form (1.1), we have
\begin{align*}
{D^m}f\left( z \right) = z + \sum\limits_{k = 2}^\infty  {{k^m}} {a_k}{z^k}
\end{align*}
Every function $f \in S$ is having an inverse $f^{-1}$ defined by  $f^{-1}[f(z)]=z$ \ ($z \in E$) and $f^{-1}[f(w)]=w$,\ $(\left|w\right|<r_0(f)\ge \frac{1}{4}$.\\
where 
\begin{equation}
f^{-1}(w)=w-a_{2} w+(2a^{2}_{2}-a_{3}) w^{3} -(5 a^3_{2}-5a_2 a_3+ a_{4}) w^{4}+...
\end{equation}
A function $f \in S$ is said to be bi-univalent in $E$ if both $f(z)$ and $f^{-1}(z)$ are univalent in $E$.\\
The class $\sum$ of bi-univalent functions was defined by Lewin \cite{B3} and it was shown that $\left|a_{2}\right|<1.51$.
 Several authors defined new subclasses of bi-univalent functions and discussed Taylor-Maclaurin coefficients $\left|a_{2}\right|$ and $\left|a_{3}\right|$ on these classes (see\cite{B5,B6,B7,B8,B9,B10,B11,B12}). \\
 For analytic functions $f$ and $g$ in $E$, the function $f$ is said to be subordinate to $g$ in $E$, written as $f \prec g$, $z \in E$, if there exists an analytic function $\phi(z)$ defined on $E$ such that $\phi(0)=0$ and $\left|\phi(z)\right|<1$ for all $z \in E$ with $f\left( z \right) = g\left( {\phi\left( z \right)} \right)$ for all $z \in E$.\\
In numerical analysis, importance of Chebyshev polynomial is increased in terms of both theoretical and practical point of view. On the other hand, many researchers interested in dealing  with orthogonal Chebyshev polynomials. The details and applications of Chebyshev polynomials  $T_{n} (t)$ and $U_{n} (t)$ can be seen in \cite {B13,B14}.\\
The Chebyshev polynomials $T_{n} (t)$ and $U_{n} (t)$ for $-1<t<1$ are defined as:\\
\begin{align*}
T_{n} (t)=cos(n\xi )
\end{align*}
\begin{align*}
U_{n} (t)=\frac{sin(n+1)\xi}{\sin(n\xi)}
\end{align*}
 where $n$ indicates degree of polynomial and $t=cos\xi$.\\
 If we choose $t = \cos \eta,  - \frac{\pi }{3} < \eta  < \frac{\pi }{3}$,  then
 \begin{align*}
 \begin{array}{l}
 H(z,t) = \frac{1}{{1 - 2zt - {z^2}}}\\
 \,\,\,\,\,\,\,\,\,\,\,\,\,\,\,\,\,\,\, = 1 + \sum\limits_{m = 1}^\infty  {\frac{{\sin (m + 1)\eta }}{{\sin \eta }}}  \,\,\,\,\,\,\left( {z \in E} \right).
 \end{array}
 \end{align*}
 Hence, \\
 \begin{align*}
 H(z,t) = 1 + 2\cos \eta z + \left( {3{{\cos }^2}\eta  - {{\sin }^2}\eta } \right){z^2} + ...\,\,\,(z \in E).
 \end{align*}
 We can write,\\
 \begin{align*}
 H(z,t) = 1 + {U_1}(t)z + {U_2}(t){z^2} + ...\,\,\,\,\,\,\left( {z \in E,\,\,\, - 1 < t < 1} \right),
 \end{align*}
 where  ${U_{m - 1}} = \frac{{\sin \left( {m{{\cos }^{ - 1}}t} \right)}}{{\sqrt {1 - {t^2}} }}, m \in {N_0} = \left\{ {0,\,1,\,2,...} \right\}$ are the Chebyshev polynomials.
Generating function of Chebyshev polynomials $T_{m} (t)$ of the first kind have the following form:\\
 \begin{align*}
 \sum\limits_{m = 0}^\infty  {{T_m}(t)\,{z^m}}  = \frac{{1 - tz}}{{1 - 2tz - {z^2}}}\,\,\,\,\,\,\,\left( {z \in E} \right).
 \end{align*}
 Chebyshev polynomials $T_{n} (t)$ and $U_{n} (t)$ are related as:\\
 \begin{align*}
 \begin{array}{l}
 \frac{{d{T_m}(t)}}{{dt}} = m{U_{m - 1}}(t),\\
 {T_m}(t) = {U_m}(t) - t{U_{m - 1}}(t),\\
 2{T_m}(t) = {U_m}(t) - {U_{m - 2}}(t)
 \end{array}
 \end{align*}
 where 
 \begin{align*}
 \begin{array}{l}
 {U_1}(t) = 2t\\
 {U_2}(t) = 4{t^2} - 1\\
 {U_3}(t) = 8{t^3} - 4t
  \end{array}
 \end{align*}
 and
  \begin{align*}
 {U_m}(t) = 2t{U_{m - 1}}(t) - {U_{m - 2}}(t).
 \end{align*}.
 
 Motivated by Dziok et al. \cite{B15}, S. Altinkaya and S. Yalcin \cite{B16, B17} research work, we study expansions of Chebyshev polynomials to provide estimates for some subclasses  initial coefficients of functions that are bi-univalent. Furthermore, we establish Fekete-Szego inequality for the class $G_{\sum} (\delta, t, m)$.

\section{Coefficient Bounds for the Class  $G_{\sum} (\delta, t, m)$ } 
We introduce the class $G_{\sum} (\delta, t, m)$ of bi-univalent functions in the following definition.
\begin{definition}
A function $f \in \sum $ of the form (1.1) belongs to the class $G_{\sum} (\delta, t, m)$, $\delta \ge1$, $t \in (\frac{1}{2}, 1)$ and $z, w \in E$ if the following conditions are satisfied:  
\begin{equation}
\frac{{\left( {1 - \delta } \right){D^m}f\left( z \right) + \delta {D^{m + 1}}f\left( z \right)}}{z} \prec H(z,t) = \frac{1}{{1 - 2tz - {z^2}}},\,\,\,\,\,\,\,z \in E
\end{equation}
and 
\begin{equation}
\frac{{\left( {1 - \delta } \right){D^m}g\left(w \right) + \delta {D^{m + 1}}g\left(w \right)}}{w} \prec H(w,t) = \frac{1}{{1 - 2tw - {w^2}}},\,\,\,\,\,\,\, w \in E
\end{equation}
where $g = {f^{ - 1}}$.

\end{definition}
\begin{theorem}
Let the function $f$ of the form (1.1) be in the class  $ \in G_{\sum} (\delta, t, m)$.
Then
\begin{align}
\left| {{a_2}} \right| \le \frac{{2t\sqrt {2t} }}{{\sqrt {{4}{t^2}{{\left[ {\left( {1 - \delta } \right){3^m} + \delta {3^{m + 1}}} \right]}^2} - \left(8{t^2} - 2 \right)\left[ {\left( {1 - \delta} \right){2^m} + \delta {2^{m + 1}}} \right]} }}
\end{align}
\begin{align}
\left| {{a_3}} \right| \le \frac{{2t}}{{\left( {1 - \delta } \right){3^m} + \delta {3^{m + 1}}}} + \frac{{16{{t}^2}}}{{{{\left[ {\left( {1 - \delta } \right){2^m} + \delta{2^{m + 1}}} \right]}^2}}}
\end{align}
and
\[
\left| {{a_3} - ra_2^2} \right| = 
\begin{cases}
\frac{{\left| {{U_1}(t)} \right|}}{{\left[ {\left( {1 - \delta } \right){3^m} + \delta {3^{m + 1}}} \right]}} & \mbox{if} \quad 0\le \left| {\sigma (r,t)} \right| \le \frac{1}{{2\left[ {\left( {1 - \delta} \right){3^m} + \delta {3^{m + 1}}} \right]}} \\
\\
\frac{{2\left| {{U_1}(t)} \right|\left| {\sigma (r,t)} \right|}}{{\left[ {\left( {1 - \delta } \right){3^m} + \delta {3^{m + 1}}} \right]}} & \mbox{if} \quad \left| {\sigma (r,t)} \right| \ge \frac{1}{{2\left[ {\left( {1 - \delta } \right){3^m} + \delta {3^{m + 1}}} \right]}}
\end{cases}
\]
where 
\begin{align*}
\sigma (r,t) = \frac{{(1 - r){{\left[ {{U_1}(t)} \right]}^2}}}{{2\left[ {{{\left[ {{U_1}(t)} \right]}^3}{{\left[ {\left( {1 - \delta } \right){3^m} + \delta{3^{m + 1}}} \right]}^2} - {U_2}(t)\left[ {\left( {1 - \delta } \right){2^m} + \delta {2^{m + 1}}} \right]} \right]}}.
\end{align*}
\end{theorem}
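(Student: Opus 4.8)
The plan is to turn the two subordination conditions defining $G_{\Sigma}(\delta,t,m)$ into coefficient identities and then read off $a_2$, $a_3$, and the Fekete--Szeg\H{o} functional $a_3-ra_2^2$. First I would record, from $D^mf(z)=z+\sum_{k\ge2}k^m a_k z^k$, that
\[
\frac{(1-\delta)D^mf(z)+\delta D^{m+1}f(z)}{z}=1+\Lambda_2 a_2 z+\Lambda_3 a_3 z^2+\cdots,
\]
where $\Lambda_2=(1-\delta)2^m+\delta 2^{m+1}$ and $\Lambda_3=(1-\delta)3^m+\delta 3^{m+1}$; the analogous expansion for $g=f^{-1}$ uses its coefficients $b_2=-a_2$ and $b_3=2a_2^2-a_3$. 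Since each left-hand side is subordinate to $H(z,t)=1+U_1(t)z+U_2(t)z^2+\cdots$, there are Schwarz functions $u(z)=c_1z+c_2z^2+\cdots$ and $v(w)=d_1w+d_2w^2+\cdots$ with $|c_1|\le1,\,|c_2|\le1-|c_1|^2$ (and likewise in the $d_n$) realising the subordinations. Expanding $H(u(z),t)=1+U_1(t)c_1z+[U_1(t)c_2+U_2(t)c_1^2]z^2+\cdots$ and matching like powers in both subordinations gives
\[
\Lambda_2 a_2=U_1(t)c_1,\qquad \Lambda_3 a_3=U_1(t)c_2+U_2(t)c_1^2,
\]
\[
-\Lambda_2 a_2=U_1(t)d_1,\qquad \Lambda_3(2a_2^2-a_3)=U_1(t)d_2+U_2(t)d_1^2.
\]

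The first and third identities force $c_1=-d_1$, hence $c_1^2=d_1^2$. To bound $|a_2|$ I would add the two right-hand identities; the quadratic terms combine, and substituting $c_1^2=\Lambda_2^2 a_2^2/U_1(t)^2$ lets me solve
\[
a_2^2=\frac{U_1(t)^3\,(c_2+d_2)}{2\big[\Lambda_3 U_1(t)^2-U_2(t)\Lambda_2^2\big]}.
\]
Then $|c_2+d_2|\le2$ together with $U_1(t)=2t$ and $U_2(t)=4t^2-1$ yields the asserted estimate for $|a_2|$; the restriction $t\in(\tfrac12,1)$ is precisely what keeps $U_1(t)>0$ and $U_2(t)>0$, so the radical is meaningful. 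For $|a_3|$ I would instead subtract the two second-order identities: because $c_1^2=d_1^2$ the $U_2$-terms cancel and $a_3=a_2^2+U_1(t)(c_2-d_2)/(2\Lambda_3)$. Writing $a_2^2=U_1(t)^2c_1^2/\Lambda_2^2$ and estimating with $|c_1|\le1$ and $|c_2|,|d_2|\le1$ gives $|a_3|\le U_1(t)^2/\Lambda_2^2+U_1(t)/\Lambda_3$, which is the stated bound once $U_1(t)=2t$ is inserted.

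For the Fekete--Szeg\H{o} inequality I would start from $a_3-ra_2^2=(1-r)a_2^2+U_1(t)(c_2-d_2)/(2\Lambda_3)$, insert the solved value of $a_2^2$, and regroup the result as a linear combination of $c_2$ and $d_2$,
\[
a_3-ra_2^2=U_1(t)\Big[\big(\sigma(r,t)+\tfrac{1}{2\Lambda_3}\big)c_2+\big(\sigma(r,t)-\tfrac{1}{2\Lambda_3}\big)d_2\Big],
\]
where $\sigma(r,t)$ is the function recorded in the statement. Applying $|c_2|,|d_2|\le1$ and the elementary identity $|x+y|+|x-y|=2\max\{|x|,|y|\}$ with $x=\sigma(r,t)$ and $y=\tfrac{1}{2\Lambda_3}$ then separates the two regimes $|\sigma(r,t)|\le\tfrac{1}{2\Lambda_3}$ and $|\sigma(r,t)|\ge\tfrac{1}{2\Lambda_3}$, producing the two branches $|U_1(t)|/\Lambda_3$ and $2|U_1(t)|\,|\sigma(r,t)|$. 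I expect this last step to be the main obstacle: forming the functional cleanly, reading off $\sigma(r,t)$ with the correct normalisation, and invoking the max-identity so that the threshold comes out as $1/(2\Lambda_3)$. A secondary point to watch throughout is the choice of coefficient normalisation---whether one uses the Schwarz bounds $|c_n|\le1$ or the Carath\'eodory bounds $|c_n|\le2$---since that choice affects only the numerical constants, and not the structure, of the three estimates.
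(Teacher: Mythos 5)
Your proposal reproduces the paper's own argument essentially step for step: the same Schwarz--function expansions of $H(u(z),t)$ and $H(v(w),t)$, the same four coefficient identities, the relation $c_1=-d_1$, adding and subtracting the second--order identities to isolate $a_2^2$ and $a_3$, and the same regrouping of $a_3-ra_2^2$ into $U_1(t)\bigl[(\sigma+\tfrac{1}{2\Lambda_3})c_2+(\sigma-\tfrac{1}{2\Lambda_3})d_2\bigr]$ followed by the max--type estimate. Moreover, your algebra is carried out correctly.

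The gap is in your repeated final claims that the resulting expressions ``are the stated bounds''---they are not. With $\Lambda_2=(1-\delta)2^m+\delta 2^{m+1}$ and $\Lambda_3=(1-\delta)3^m+\delta 3^{m+1}$, your (correct) computation gives
\[
a_2^2=\frac{U_1(t)^3(c_2+d_2)}{2\bigl[U_1(t)^2\Lambda_3-U_2(t)\Lambda_2^2\bigr]},
\qquad
|a_2|\le\frac{2t\sqrt{2t}}{\sqrt{4t^2\Lambda_3-(4t^2-1)\Lambda_2^2}},
\]
whereas the theorem prints $4t^2\Lambda_3^2-(8t^2-2)\Lambda_2$ under the radical; your $\sigma(r,t)$ comes out with denominator $2\bigl[U_1(t)^2\Lambda_3-U_2(t)\Lambda_2^2\bigr]$, whereas the printed one has $2\bigl[U_1(t)^3\Lambda_3^2-U_2(t)\Lambda_2\bigr]$; and your $|a_3|$ bound is $2t/\Lambda_3+4t^2/\Lambda_2^2$, versus the printed $2t/\Lambda_3+16t^2/\Lambda_2^2$. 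Only this last printed estimate is still implied by yours (being weaker; its $16t^2$ is exactly what the Carath\'eodory normalisation $|c_1|\le 2$ would give, the inconsistency you yourself flag at the end). These mismatches originate in the paper itself: in passing from its equations (2.16) and (2.18) to (2.19), the square migrates from $\Lambda_2$ to $\Lambda_3$, and the theorem statement inherits that slip, so the printed constants do not follow from the paper's own intermediate identities either. Consequently, as a proof of the statement \emph{as printed}, your argument fails at the identification step: for the $|a_2|$ bound and the Fekete--Szeg\H{o} cases one would need your expressions to dominate the printed ones, which is false in general (e.g.\ $m=0$, $\delta=1$, $t$ near $1$). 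The honest conclusion is that the theorem holds with your corrected constants, and a careful write--up must say so rather than assert agreement. Incidentally, your remark that $t\in(\tfrac12,1)$ makes the radical meaningful is also not quite right: positivity of $U_1,U_2$ does not prevent $U_2(t)\Lambda_2^2>U_1(t)^2\Lambda_3$ for large $m$, so absolute values are needed in the denominator---a flaw your proposal shares with the paper.
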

\begin{proof}
Let $f \in G_{\sum} (\delta, t, m)$ be given by Taylor-Maclaurin expansion (1.1). Then, there are analytic functions $u$ and $v$ such that\\
 $u(0)=v(0)=0,\quad \left| {u (z)} \right| < 1$ \quad and \quad $\left| {v(z)} \right| < 1$,\quad $\left( {z,\,\,w \in E} \right)$\\
 we can write
\begin{align}
\frac{{\left( {1 - \delta } \right){D^m}f\left( z \right) + \delta {D^{m + 1}}f\left( z \right)}}{z} \prec H(z,t) = \frac{1}{{1 - 2tz - {z^2}}},\,\,\,\,\,\,\, w \in E
\end{align}
and
\begin{align}
\frac{{\left( {1 - \delta } \right){D^m}g\left( w \right) + \delta {D^{m + 1}}g\left( w \right)}}{w}  \prec H(w,t) = \frac{1}{{1 - 2tw - {w^2}}},\,\,\,\,\,\,\, w \in E.
\end{align}
Equivalently,
\begin{equation}
\frac{{\left( {1 - \delta } \right){D^m}f\left( z \right) + \delta {D^{m + 1}}f\left( z \right)}}{z} =1 + {U_1}(t)u(z) + {U_2}(t){u^2}(z) + ...
\end{equation}
and
\begin{equation}
\frac{{\left( {1 - \delta } \right){D^m}g\left( w \right) + \delta {D^{m + 1}}g\left( w \right)}}{w} = 1 + {U_1}(t)v(w) + {U_2}(t){v^2}(w) + ...
\end{equation}
Using equation (2.7) and (2.8), we obtain
\begin{equation}
\frac{{\left( {1 - \delta} \right){D^m}f\left( z \right) + \delta {D^{m + 1}}f\left( z \right)}}{z} =1 + {U_1}(t){p_1}z + \left[ {{U_2}(t){p_2} + {U_2}(t)p_1^2} \right]{z^2}...
\end{equation}
and
\begin{equation}
\frac{{\left( {1 - \delta } \right){D^m}g\left( w \right) + \delta {D^{m + 1}}g\left( w \right)}}{w} = 1 + {U_1}(t){q_1}w + \left[ {{U_2}(t){q_2} + {U_2}(t)q_1^2} \right]{w^2}...
\end{equation}
where 
$\left| {u \left( z \right)} \right| = \left| {{p_1}z + {p_2}{z^2} + ...} \right| < 1$ \quad and \quad $\left| {v \left( w \right)} \right| = \left| {{q_1}w + {q_2}{w^2} + ...} \right| < 1$\\
then 
$\left| {{p_j}} \right| < 1$  \quad and \quad $\left| {{q_j}} \right| < 1$ \quad $\left( {j \in \mathbb{N}} \right)$.\\
Clearly from the equations (2.9) and (2.10), we have
\begin{align}
{U_1}(t){p_1} = \left[ {\left( {1 - \delta} \right){2^m} + \delta {2^{m+1}}} \right]{a_2}\\
{U_1}(t){p_2} + {U_2}(t)p_1^2 = \left[ {\left( {1 - \delta} \right){3^m} + \delta {3^{m+1}}} \right]{a_3}\\
- {U_1}(t){q_1} = \left[ {\left( {1 - \delta } \right){2^m} + \delta {2^{m+1}}} \right]{a_2}
\end{align}
and
\begin{align}
{U_1}(t){q_2} + {U_2}(t)q_1^2 = \left[ {\left( {1 - \delta } \right){3^m} + \delta {3^{m+1}}} \right]\left( {2a_2^2 - {a_3}} \right).
\end{align}
From equations (2.11) and (2.13), we have
\begin{align}
{p_1} =  - {q_1}
\end{align}
and
\begin{align}
2{\left[ {\left( {1 - \delta } \right){2^m} + \delta {2^{m + 1}}} \right]^2}{\left( {{a_2}} \right)^2} &= {\left[ {{U_1}(t)} \right]^2}\left[ {p_1^2 + q_1^2} \right]\\
{\left( {{a_2}} \right)^2} &= \frac{{{{\left[ {{U_1}(t)} \right]}^2}\left[ {p_1^2 + q_1^2} \right]}}{{2{{\left[ {\left( {1 - \delta } \right){2^m} + \delta {2^{m + 1}}} \right]}^2}}}.
\end{align}
After adding (2.12) and (2.14), we get
\begin{align}
2\left[ {\left( {1 - \delta } \right){3^m} + \delta {3^{m + 1}}} \right]{\left( {{a_2}} \right)^2} = {U_1}(t)\left( {{p_2} + {q_2}} \right) + {U_2}(t)\left[ {p_1^2 + q_1^2} \right].
\end{align}
By substituting (2.16) in equation (2.18), we find
\begin{align}
{\left( {{a_2}} \right)^2} = \frac{{{{\left[ {{U_1}(t)} \right]}^3}\left( {{p_2} + {q_2}} \right)}}{{2{{\left[ {{U_1}(t)} \right]}^2}{{\left[ {\left( {1 - \delta } \right){3^m} + \delta {3^{m + 1}}} \right]}^2} - 2{U_2}(t)\left[ {\left( {1 - \delta } \right){2^m} + \delta {2^{m + 1}}} \right]}}
\end{align}
which yields
\begin{align}
\left| {{a_2}} \right| \le \frac{{2t\sqrt {2t} }}{{\sqrt {{4}{t^2}{{\left[ {\left( {1 - \delta } \right){3^m} + \delta {3^{m + 1}}} \right]}^2} - \left(8{t^2} - 2 \right)\left[ {\left( {1 - \delta} \right){2^m} + \delta {2^{m + 1}}} \right]} }}.
\end{align}
By subtracting equation (2.14) from (2.12), we obtain
\begin{align*}
\left[ {\left( {1 - \delta } \right){3^m} + \delta {3^{m + 1}}} \right]\left( {2{a_3} - 2a_2^2} \right) = {U_1}(x)\left( {{p_2} - {q_2}} \right) + {U_2}(t)\left( {p_1^2 - q_1^2} \right).
\end{align*}
Using equation (2.15), we get
\begin{align}
{a_3} = \frac{{{U_1}(t)\left( {{p_2} - {q_2}} \right)}}{{2\left[ {\left( {1 - \delta } \right){3^m} + \delta {3^{m + 1}}} \right]}} + a_2^2.
\end{align}
By using equation (2.17) in (2.21), we obtain
\begin{align*}
{a_3} = \frac{{{U_1}(t)\left( {{p_2} - {q_2}} \right)}}{{2\left[ {\left( {1 - \delta } \right){3^m} + \delta {3^{m + 1}}} \right]}} + \frac{{{{\left[ {{U_1}(t)} \right]}^2}\left( {p_1^2 + q_1^2} \right)}}{{2{{\left[ {\left( {1 - \delta } \right){2^m} + \delta {2^{m + 1}}} \right]}^2}}}
\end{align*}
which gives
\begin{align*}
\left| {{a_3}} \right| \le \frac{{2t}}{{\left( {1 - \delta } \right){3^m} + \delta {3^{m + 1}}}} + \frac{{16{{t}^2}}}{{{{\left[ {\left( {1 - \delta } \right){2^m} + \delta{2^{m + 1}}} \right]}^2}}}.
\end{align*}
From (2.21), for  $t \in \mathbb{R}$, we write
\begin{align}
{a_3} - ra_2^2 = \frac{{{U_1}(t)\left( {{p_2} - {q_2}} \right)}}{{2\left[ {\left( {1 - \delta } \right){3^m} + \delta {3^{m + 1}}} \right]}} + (1 - r)a_2^2.
\end{align}
By substituting (2.19) in (2.22), we have
\begin{align*}
{a_3} - ra_2^2 &= \frac{{{U_1}(t)\left( {{p_2} - {q_2}} \right)}}{{2\left[ {\left( {1 - \delta } \right){3^m} + \delta {3^{m + 1}}} \right]}} + (1 - r)\frac{{{{\left[ {{h_2}(x)} \right]}^3}\left( {{p_2} + {q_2}} \right)}}{{2\left[ {{{\left[ {{U_1}(t)} \right]}^3}{{\left[ {\left( {1 - \delta } \right){3^m} + \delta {3^{m + 1}}} \right]}^2} - {U_2}(t)\left[ {\left( {1 - \delta } \right){2^m} + \delta{2^{m + 1}}} \right]} \right]}}\\
&= {U_1}(t)\left\{ {\left( {\sigma (r,t) + \frac{1}{{2\left[ {\left( {1 - \delta } \right){3^m} + \delta {3^{m + 1}}} \right]}}} \right){p_2} + \left( {\sigma (r,t) - \frac{1}{{2\left[ {\left( {1 - \delta } \right){3^m} + \delta {3^{m + 1}}} \right]}}} \right){q_2}} \right\}
\end{align*}
where
\begin{align*}
\sigma (r,t) = \frac{{(1 - r){{\left[ {{U_1}(t)} \right]}^2}}}{{2\left[ {{{\left[ {{U_1}(t)} \right]}^3}{{\left[ {\left( {1 - \delta } \right){3^m} + \delta{3^{m + 1}}} \right]}^2} - {U_2}(t)\left[ {\left( {1 - \delta } \right){2^m} + \delta {2^{m + 1}}} \right]} \right]}}.
\end{align*}
Hence, we conclude that
\[
\left| {{a_3} - ra_2^2} \right| = 
\begin{cases}
\frac{{\left| {{U_1}(t)} \right|}}{{\left[ {\left( {1 - \delta } \right){3^m} + \delta {3^{m + 1}}} \right]}} & \mbox{if} \quad 0\le \left| {\sigma (r,t)} \right| \le \frac{1}{{2\left[ {\left( {1 - \delta} \right){3^m} + \delta {3^{m + 1}}} \right]}} \\
\\
\frac{{2\left| {{U_1}(t)} \right|\left| {\sigma (r,t)} \right|}}{{\left[ {\left( {1 - \delta } \right){3^m} + \delta {3^{m + 1}}} \right]}} & \mbox{if} \quad \left| {\sigma (r,t)} \right| \ge \frac{1}{{2\left[ {\left( {1 - \delta } \right){3^m} + \delta {3^{m + 1}}} \right]}}
\end{cases}
\]
It completes the proof of theorem (2.1).
\end{proof}

\pagebreak

\end{document}